\documentclass[12pt]{amsart}
\usepackage[lite, alphabetic, nobysame]{amsrefs}
\usepackage{amsmath,mathtools}
\usepackage{amssymb}
\usepackage{srcltx}
\usepackage{xcolor}


\textwidth15cm
\textheight21cm
\evensidemargin.2cm
\oddsidemargin.2cm

\addtolength{\headheight}{5.2pt}    

\newtheorem{theorem}{Theorem}
\newtheorem{lemma}{Lemma}

\newtheorem*{claim}{Claim}
\newtheorem{fact}{Fact}

\theoremstyle{remark}

\title[The random poset does not admit a generic pair]{The automorphism group of the random poset does not admit a generic pair}

\author[A. Kwiatkowska]{Aleksandra Kwiatkowska}
\address{Institut f\"ur Mathematische Logik und Grundlagenforschung, Westfalische Wilhelms-Universit\"at M\"unster, Einsteinstr. 62, 48149 M\"unster, Germany  And Instytut Matematyczny, Uniwersytet Wrocławski, pl. Grunwaldzki 2/4, 50-384 Wrocław, Poland}
\email{kwiatkoa@uni-muenster.de}

\author[A. Panagiotopoulos]{Aristotelis Panagiotopoulos}
\address{Institut f\"ur Mathematische Logik und Grundlagenforschung, Westfalische Wilhelms-Universit\"at M\"unster, Einsteinstr. 62, 48149 M\"unster, Germany 
And JG
}
\email{aristotelis.panagiotopoulos@gmail.com}

\thanks{We would like to thank A.S. Kechris for his feedback on an earlier draft of this paper.}
\thanks{Funded by the Deutsche Forschungsgemeinschaft (DFG, German Research Foundation) under Germany's Excellence Strategy EXC 2044--390685587, Mathematics M\"unster: Dynamics--Geometry--Structure
and by CRC 1442 Geometry: Deformations and Rigidity.}
\thanks{A.K. additionally was  supported by Narodowe Centrum Nauki grant 2016/23/D/ST1/01097.}

\keywords{Random poset, universal poset, ample generics, mutual genericity, small index property, ultrahomogeneous structure, automorphism group}

\subjclass[2000]{20B27, 06A99}

\begin{document}
\maketitle

\begin{abstract}
We show that the conjugacy class of every pair of automoprhisms of the random poset is meager. This answers a question of Truss; see also Kuske-Truss. 

\noindent EDIT: Work in progress, at the moment there is a gap in the proof of Theorem 2.
\end{abstract}

\section{Introduction}

Some of the most well-studied properties of Polish groups concern the interactions between the topological and algebraic structure of such groups. Indeed, properties such as
the \emph{small index property},  the \emph{automatic continuity}, and the \emph{Bergman property} are  often used to evaluate how much of the  topology and the dynamical properties of  a Polish group $G$  can be recovered from its pure algebraic structure.  While most of these properties rarely occur in the realm of locally compact Polish groups, they tend to be a typical feature of automorphism groups $\mathrm{Aut}(M)$ of classical ultrahomogeneous  structures $M$, such as the \emph{random graph} and the linear ordering $(\mathbb{Q},<)$.

That being said, there are still two structures among these classical examples for which it  remains open whether their automorphism groups satisfy any of the above properties---namely, the \emph{random poset} $(P,<)$ and the \emph{random tournament} $(T,\rightarrow)$.
The main obstacle here is that the only known techniques we have for establishing such properties for an automorphism group  $\mathrm{Aut}(M)$  use either the existence of elements $g\in\mathrm{Aut}(M)$ with ``small" \emph{support} $\mathrm{supp}(g):=\{a\in M \mid g(a)\neq a\}$, see~\cite{B,RS}; or  require $\mathrm{Aut}(M)$ to satisfy a relatively stronger property known as \emph{ample generics} \cite{KR}. However, both   $\mathrm{Aut}(P,<)$ and $\mathrm{Aut}(T,\rightarrow)$ fail to have elements with ``small" support (see~\cite[Lemma 3.1]{GCR} and \cite[Lemma 2.9]{MT})
and for both structures it has been a long-standing open question whether their automorphism groups have ample generics (for $\mathrm{Aut}(P,<)$, see~\cite{KT,T}; for  $\mathrm{Aut}(T,\rightarrow)$, see \cite{HL} and \cite[Proposition~6.3.1]{S}). In this paper we settle the later question for $(P,<)$  in the negative.  As a consequence,  any potential proof that   $\mathrm{Aut}(P,<)$  has, say, the automatic continuity property, would truly have to involve   novel techniques.

\subsection*{Defintions and notation} A {\bf poset} (partially ordered set) is a structure $(N,<_N)$ so that for all $a,b,c\in N$ we have that:  $(a<_N b)\wedge(b<_Nc)$ implies $a<_Nc$; and $a\not<_Na$. We write $a\perp b$ whenever $a\neq b$ and both $a\not<_Nb$, $b\not<_Na$ hold. 
 An {\bf embedding of posets} $f\colon (A,<_A)\to (B,<_B)$ is any injective function $f\colon A\to B$ so that $(a<_Aa')\iff (f(a)<_B f(a'))$. 
The {\bf random poset} $\mathcal{P}$  is the unique  up to isomorphism countable poset $(P,<_P)$ so that: 
\begin{enumerate}
\item every finite poset embeds in $\mathcal{P}$;
\item if $f,g \colon (A,<_A) \to \mathcal{P}$ are two embeddings  and $A$ is finite, then  $\varphi\circ f=g$ for some automorphism  $\varphi\in\mathrm{Aut}(\mathcal{P})$ of $\mathcal{P}$.
\end{enumerate}

Below we view the automorphism group  $\mathrm{Aut}(\mathcal{P})$ of $\mathcal{P}$ as a Polish group endowed with the pointwise convergence topology. 
Let $G$ be a Polish  group. We say that $f\in G$ is {\bf generic} if the conjugacy class $\{gfg^{-1}\colon g\in G\}$ of $f$
is comeager. Furthermore, $(f_1,f_2)\in G^2$ is a {\bf generic pair} if the conjugacy class 
$\{(gf_1g^{-1}, gf_2g^{-1})\colon g\in G\}$ of $(f_1,f_2)$ is comeager in $G^2$.
More generally, we say that  $G$ has {\bf ample generics} if for every $n\geq 1$ there exists $(f_1,\ldots, f_n)$
whose diagonal conjugacy class $\{(gf_1g^{-1}, \ldots, gf_ng^{-1})\colon g\in G\}$ is comeager in $G^n$.

Kuske-Truss \cite{KT} showed that  $\mathrm{Aut}(\mathcal{P})$ has a generic automorphism. In  Section 3  of the same paper they remark that the next principal goal should be to establish that  ${\rm Aut}(\mathcal{P})$ has ample generics and to use this as a mean to prove that ${\rm Aut}(\mathcal{P})$  has the small index property. They also point out that merely the existence of a generic pair in ${\rm Aut}(\mathcal{P})$  could be used to streamline the proof of the fact that  ${\rm Aut}(\mathcal{P})$ is simple, see  \cite{GCR}. The question of whether  ${\rm Aut}(P)$  has a generic pair reappears in \cite[Question 3]{KT}. 
The following is the main result of this article.

\begin{theorem}\label{T:intro}
The automorphism group $\mathrm{Aut}(\mathcal{P})$ of the random poset $\mathcal{P}$ does not have a generic pair.
 In fact, for every $(f_1,f_2)\in \mathrm{Aut}(\mathcal{P})^2$ the diagonal conjugacy class $\{(gf_1g^{-1}, gf_2g^{-1})\colon g\in \mathrm{Aut}(\mathcal{P})\}$ of $(f_1,f_2)$   is meager  in $\mathrm{Aut}(\mathcal{P})^2$.
\end{theorem}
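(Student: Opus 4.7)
My plan is to work within the Kechris--Rosendal framework. Let $\mathcal{K}_2$ denote the class of \emph{finite $2$-systems}: finite posets equipped with two partial automorphisms that extend to a pair of automorphisms of $\mathcal{P}$. The existence of a generic pair in $\mathrm{Aut}(\mathcal{P})^2$ is equivalent to $\mathcal{K}_2$ satisfying the joint embedding property (JEP) together with the weak amalgamation property (WAP). The JEP is routine from Fra\"iss\'e theory of $\mathcal{P}$, so the heart of the argument is to falsify WAP: produce some $A\in\mathcal{K}_2$ such that for every $A^*\supseteq A$ in $\mathcal{K}_2$ one can find $B_1, B_2\supseteq A^*$ in $\mathcal{K}_2$ with no common extension inside $\mathcal{K}_2$.

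\textbf{Construction of the non-amalgamable pair.} Take $A$ to consist of two incomparable points $x, y$ with empty partial automorphisms. Given an arbitrary extension $A^*\supseteq A$, first pass to $A^{**}\supseteq A^*$ by adjoining two fresh points $u, v$, each incomparable to every element of $A^*$ and lying outside $\mathrm{dom}(p_i^{A^*})\cup\mathrm{range}(p_i^{A^*})$ for $i=1,2$. Then build $B_1\supseteq A^{**}$ by adjoining fresh $c, d$, extending the partial automorphisms by $p_2^{B_1}(u)=c$ and $p_1^{B_1}(c)=d$ with $c, d$ incomparable to every prior element, and finally imposing the new order relation $d <_{B_1} v$. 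Define $B_2\supseteq A^{**}$ symmetrically with fresh $c', d'$, $p_2^{B_2}(u)=c'$, $p_1^{B_2}(c')=d'$, and $d' >_{B_2} v$. Because $v$ and $d$ lie outside the range of the pre-existing partial automorphisms, the newly imposed order relation propagates to no further forced relation via the partial-automorphism conditions, so $B_1$ and $B_2$ are valid partial structures. In any $C\in\mathcal{K}_2$ amalgamating $B_1$ and $B_2$ over $A^{**}$, the elements $p_2(u)$ and $p_1(p_2(u))$ are uniquely determined, forcing $c = c'$ and $d = d'$ in $C$; thus $C$ would have to contain both $d <_C v$ and $d >_C v$, contradicting antisymmetry. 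Hence WAP fails.

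\textbf{Main obstacle and conclusion.} The principal technical obstacle is verifying that $B_1, B_2$ genuinely belong to $\mathcal{K}_2$, that is, that the two partial automorphisms in each $B_i$ can be \emph{simultaneously} extended to total automorphisms of $\mathcal{P}$. This amounts to a joint Hrushovski/Herwig--Lascar-style extension property for pairs of partial automorphisms of finite posets, and the careful choice of $u, v$ outside the orbit structure of $A^*$ is precisely what keeps the bookkeeping tractable. The other delicate point is ensuring that the extension-to-full-automorphism needed for $B_1 \in \mathcal{K}_2$ is compatible with the extension needed for $B_2$--fortunately they only interact at the common substructure $A^{**}$, where the constructions agree. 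Once WAP is falsified, the Kechris--Rosendal characterization yields that no diagonal conjugacy class in $\mathrm{Aut}(\mathcal{P})^2$ is comeager. The upgrade to the stronger statement that \emph{every} diagonal conjugacy class is meager follows from the standard topological zero--one phenomenon for automorphism groups of Fra\"iss\'e structures: by JEP all orbits have the same dense closure in $\mathrm{Aut}(\mathcal{P})^2$, so any non-meager orbit would be comeager on an open set, and one more round of the Kechris--Rosendal machinery would produce a globally comeager orbit--contradicting the failure of WAP.
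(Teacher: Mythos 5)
Your reduction to ``JEP plus failure of WAP for pairs of partial automorphisms'' is exactly the framework the paper uses, but your refutation of WAP has a fatal gap at its central step. The weak amalgamation property only requires that $B_1$ and $B_2$ be amalgamated \emph{over the original base $A$}, not over the intermediate structures $A^{*}$ or $A^{**}$: in the definition the required identity is $\beta'\circ\beta\circ\alpha=\gamma'\circ\gamma\circ\alpha$, so the two embeddings into the amalgam need only agree on the image of $A$. Your base $A$ is two incomparable points with empty partial maps, so an amalgam of $B_1$ and $B_2$ is only obliged to identify the two copies of $x$ and of $y$. Nothing forces it to identify the two copies of the fresh point $u$ (which lies outside $A$), hence nothing forces $c=c'$ or $d=d'$, and the order clash $d<v$ versus $d'>v$ never materializes: an amalgam may simply keep the copies of $u,v,c,d$ coming from $B_1$ and from $B_2$ pairwise distinct. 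What you have actually refuted is amalgamation over the cofinal family of bases $A^{**}$, i.e.\ the \emph{full} amalgamation property, which is strictly weaker than refuting WAP; for comparison, full amalgamation already fails for finite posets with a \emph{single} partial automorphism by the same kind of order clash, yet Kuske and Truss proved that a generic automorphism (equivalently, one-dimensional WAP) does hold there.

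The identifications that an amalgam over $A$ is forced to make are precisely those of points reachable from $A$ by words in the partial maps and their inverses, so any genuine obstruction must be planted inside that orbit. This is why the paper's base is $A=\{x<y<z\}$ with $f_A=\{(x,y),(z,z)\}$ and $g_A=\{(x,z)\}$, and why its two extensions disagree about the points $g^{m+1}f^{n+1}(x)$ and $g^{m}f^{n+2}(x)$ --- all generated from $x\in A$ --- making them equal in one extension and strictly comparable in the other. The genuinely hard part, which your proposal does not engage with, is arranging such a clash above an \emph{arbitrary} extension $\widetilde{A}$ of $A$: the paper's ``freeness'' lemma exists precisely to gain enough control over how the $f$- and $g$-orbits of $x$ can be prolonged beyond whatever commitments $\widetilde{A}$ has already made. (Your closing zero--one argument upgrading ``no comeager class'' to ``all classes meager'' is essentially correct and matches the paper.)
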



\section{Proof of Theorem \ref{T:intro} }

We  start by reformulating the statement of Theorem \ref{T:intro} in combinatorial terms. For this we will make use of results from \cite{KR}.

A  {\bf partial automorphism} is a triple $(A,<_A,f_A)$, where $(A,<_A)$ is a finite poset and  $f_A\colon A \rightharpoonup A$ is a partial map, which induces an isomorphism between $\mathrm{dom}(f_A)$ and   $\mathrm{rng}(f_A)$. In what follows we will identify such $f_A$ with its graph, i.e., $f_A\subseteq A^2$.
 By an {\bf embedding $\alpha \colon (A,<_A,f_A)\to (B,<_B,f_B)$ of partial  automorphisms} we mean any embedding $\alpha \colon (A,<_A) \to (B,<_B)$ of posets  so that  $\mathrm{rng}(\alpha\upharpoonright \mathrm{dom}(f_A)) \subseteq \mathrm{dom}(f_B)$ and  $f_B \circ \alpha=\alpha\circ f_A$, for every $a\in\mathrm{dom}(f_A)$. A {\bf pair of partial automorphisms} is a quadruple $(A,<,f_A,g_A)$, so that both $(A,<_A,f_A)$ and $(A,<_A,g_A)$ are partial automorphisms.  By an {\bf embedding $\alpha \colon (A,<_A,f_A,g_A)\to (B,<_B,f_B,g_B)$ of pairs of partial  automorphisms} we mean any map $\alpha\colon A \to B$ which induces  embeddings $\alpha \colon (A,<_A,f_A)\to (B,<_B,f_B)$ and $\alpha \colon (A,<_A,g_A)\to (B,<_B,g_B)$ of partial automorphisms.  
 We say that $(B,<_B,f_B,g_B)$ extends $(A,<_A,f_A,g_A)$  and write
 \[(A,<_A,f_A,g_A)\preceq (B,<_B,f_B,g_B),\]
 if the inclusion $A\subseteq B$ induces an embedding of pairs of partial automorphisms.   Similarly we will define the  extension $(A,<_A)\preceq (B,<_B)$ between posets  and the extension  $(A,<_A,f_A)\preceq (B,<_B,f_B)$  between partial automorphisms---notice that posets and partial automorphisms can be viewed as special types of pairs of partial automorphisms  $(A,<_A,f_A,g_A)$ where either both $f_A,g_A$ or only $g_A$ are equal to the empty partial map $\emptyset$.

 Let $\mathcal{A}$ be the {\bf category of  embeddings between pairs of partial automorphisms}. By   \cite[Theorem 2.11]{KR}, the action of ${\rm Aut}(\mathcal{P})$ on  $\big({\rm Aut}(\mathcal{P})\big)^2$  by conjugation $g\cdot(f_1,f_2)\mapsto (gf_1g^{-1},gf_2g^{-1})$ has a dense orbit if and only if $\mathcal{A}$ has  the  \emph{joint embedding property} and by \cite[Theorem 6.2]{KR}  ${\rm Aut}(\mathcal{P})$ has a generic pair if and only if the category $\mathcal{A}$ has both the \emph{joint embedding property} and the \emph{weak amalgamation property}. Recall that a  category $\mathcal{C}$ has the {\bf Joint Embedding Property} ({\bf JEP}) if for any two objects $A,B\in\mathcal{C}$ 
there exists $C\in\mathcal{C}$ which embeds both $A$ and $B$.
A category $\mathcal{C}$ has the {\bf Weak Amalgamation Property} ({\bf WAP}) if for every object $A\in\mathcal{C}$ there is $\alpha \colon A\to \widetilde{A}$ in $\mathcal{C}$ so that for all $\beta \colon \widetilde{A}\to B$ and $\gamma\colon \widetilde{A}\to C$ in $\mathcal{C}$ there are
 $\beta'\colon B\to D$ and $\gamma'\colon C\to D$ in $\mathcal{C}$, so that $  \beta'\circ \beta \circ \alpha =\gamma' \circ \gamma \circ \alpha.$

 Notice that the continuous action of a Polish group with a dense orbit does not contain any  orbits which are neither meager nor comeager. Hence, by the previous paragraph, Theorem \ref{T:intro} follows from the following result.

\begin{theorem}\label{T}
The category $\mathcal{A}$ has JEP but does not have WAP.
\end{theorem}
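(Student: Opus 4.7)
My plan is to prove the two halves of the theorem separately, with JEP being essentially routine and WAP failure carrying the technical weight. For JEP, given two objects $(A_1, <_1, f_1, g_1)$ and $(A_2, <_2, f_2, g_2)$ in $\mathcal{A}$, I will form the disjoint union $A_1\sqcup A_2$ with order $<_1\cup<_2$ (no comparabilities between the two copies) and partial automorphisms $f_1\cup f_2$ and $g_1\cup g_2$. Injectivity, order preservation, and the domain/range conditions all follow from the disjointness of the two pieces, and both $A_i$ embed canonically as substructures.

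For the failure of WAP I will exhibit the witness $A = (\{a,b,c\}, \{a<b\}, \{(a,b)\}, \{(a,c)\})$, so that $a<b$, $a\perp c$, $b\perp c$, with $f_A(a)=b$ and $g_A(a)=c$. The crucial structural feature is that the combination $a<_A b$ and $f_A(a)=b$ forces, in any $\widetilde A\supseteq A$, the $f$-orbit of $a^*:=\alpha(a)$ to be strictly order-increasing; hence it cannot close into a cycle and is truncated in any finite $\widetilde A$ at some top boundary point $p$ with $f_{\widetilde A}(p)$ undefined. Starting from such a $\widetilde A$ and its $\langle f,g\rangle$-orbit $O$ of $a^*$, the basic observation is that in any amalgam $D$ of $\beta\colon\widetilde A\to B$ and $\gamma\colon\widetilde A\to C$ over the image of $A$, the identification $\beta'\beta\alpha(a)=\gamma'\gamma\alpha(a)$ propagates through $O$: since $\beta',\gamma'$ are embeddings of pair-of-partial-automorphisms, one has $\beta'(y)=\gamma'(y)$ for every $y\in O$. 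The contradiction will be manufactured by locating an orbit element $y\in O$ whose relation with $b^+:=f_B(p)$ is not forced, meaning $y\notin\mathrm{rng}(f_{\widetilde A})$ (no partial-automorphism constraint of the form $b^+?y\iff p?f^{-1}(y)$) and $y\not<_{\widetilde A}p$, $y\neq p$ (no transitivity constraint from $b^+>p$). Then in $B$ I set $b^+$ above $y$ and in $C$ I set $c^+:=f_C(p)$ incomparable to $y$ (or the reverse); the forced identity $\beta'(b^+)=\gamma'(c^+)=f_D(p^*)$ together with $\beta'(y)=\gamma'(y)$ produces incompatible order relations at the same pair of points in $D$.

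The hard part is to establish, for every finite $\widetilde A\supseteq A$, that such a pair $(p,y)$ actually exists. The adversary can try to block the argument in two ways: by adding pre-images of orbit elements to put them into $\mathrm{rng}(f_{\widetilde A})$, or by arranging the order so all free candidates lie below $p$. I plan to argue that neither strategy succeeds in any finite $\widetilde A$. The obstruction to the first is that the $f^{-1}$-chain emanating from $c^*=g(a^*)$—forced by the partial-automorphism conditions together with $a^*<b^*$ to be strictly order-decreasing—must terminate at a bottom element $w_N\in O\setminus\mathrm{rng}(f_{\widetilde A})$, and the inherited constraint $a^*\perp c^*$ prevents merging this chain back into the $a^*$-$f$-chain (such a merger would force $c^*<a^*$, contradiction). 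The obstruction to the second is that once the top boundary $p$ dominates too much, one simply passes to a different boundary—e.g.\ the $f$-boundary at $c^*$, or a $g$-boundary—where $a^*\perp c^*$ prevents transitivity from reaching across branches. The combinatorial core of the proof is thus to show that this ``closure game'' never terminates inside a finite structure, so that an exploitable free relation always remains; I expect to formalize this via a careful case distinction on which of finitely many boundary-types exists in $\widetilde A$, leveraging ultrahomogeneity of $\mathcal{P}$ to realise the required incompatible quantifier-free types when building $B$ and $C$.
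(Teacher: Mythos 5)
Your JEP argument is fine (the paper does the same, except it makes the two copies comparable rather than incomparable; either works). The skeleton of your WAP argument is also sound as far as it goes: in any amalgam over $A$ the images of all points in the $\langle f^{\pm},g^{\pm}\rangle$-orbit of $a$ are forcibly identified, so it would suffice to produce two extensions of a boundary point of the orbit assigning incompatible quantifier-free types over the orbit. The genuine gap sits exactly where you flag ``the hard part,'' and it is not merely a matter of writing out cases: the free pair $(p,y)$ you want does \emph{not} in general exist inside $\widetilde A$ itself. Indeed, if such a pair always existed with $p$ and $y$ in the $f$-orbit of $a$ alone, the identical argument would refute weak amalgamation for single partial automorphisms and hence contradict the Kuske--Truss theorem that $\mathrm{Aut}(\mathcal P)$ has a generic automorphism; so $y$ must be reached through $g$. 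But in your witness $g(a)=c$ with $a\perp c$ imposes no monotonicity on any $g$-orbit, so nothing you say rules out the adversary closing every $g$-chain in the orbit into a finite cycle (a cyclically permuted antichain), eliminating all $g$-boundaries, and absorbing every orbit point except the bottom of the (necessarily increasing, hence acyclic) $f$-chain of $a$ into $\mathrm{rng}(f_{\widetilde A})$, with that bottom point below $p$ --- after which every relation between $f(p)$ and the orbit is forced. Your stated obstructions to this do not hold: in particular, the claim that the $f^{-1}$-chain emanating from $c^{*}=g(a^{*})$ is forced to be strictly decreasing is unjustified, since nothing relates $c^{*}$ to its $f$-preimages, so that chain too can be closed up.

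The paper's resolution is structurally different and is the real content of the proof: one does not look for freedom inside $\widetilde A$ but first \emph{extends} $\widetilde A$ to manufacture it. Its Lemma~\ref{L1} shows that the increasing $f$-orbit of $x$ can be prolonged --- processing the finitely many old points one at a time to stabilise their relation to the tail of the orbit --- until it reaches a fresh point $a_f$ with a twin $b_f>a_f$ of the same type over everything else, making $f$ free on the whole gap $(a_f,b_f)$; one then inserts a point $c$ into that gap and needs the $g$-orbit of $c$ to be forced increasing so that the same lemma applies to $g$. This is precisely why the paper's witness is $x<_Ay<_Az$ with $f_A(z)=z$ and $g_A(x)=z$ rather than your incomparable configuration: the fixed point $z$ caps the $f$-orbit of $x$, and $g(x)=z$ forces $c<g(c)$ for any $c$ strictly between the orbit and $z$. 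Only after this two-stage preparation does the paper split into two non-amalgamable extensions, by making the word $g^{m+1}f^{n+1}$ versus $g^{m}f^{n+2}$ evaluate to equal points in one extension and to distinct comparable points in the other. So your proposal is missing the key lemma that creates the freedom by extension, and very likely also a witness strong enough for that lemma to apply to $g$; as written, the ``closure game'' step would need to be replaced rather than merely formalised.
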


The fact that  $\mathcal{A}$ has JEP is straightforward. Indeed, let $(A,<_A,f_A,g_A)\in \mathcal{A}$ and  $(B,<_B,f_B,g_B)\in \mathcal{A}$ and assume without loss of generality that $A \cap B=\emptyset$. Then both pairs of partial  automorphisms embed to the partial automorphism $(C,<_C,f_C,g_C)$, where  $C:=A\cup B$, $<_C$ is the partial order $<_A \cup \; \{(a,b)\mid a\in A, b\in B\} \; \cup <_B $, $f_C:=f_A\cup f_B$, and $g_C:=g_A\cup g_B$.

The rest of this paper is  devoted to showing that   $\mathcal{A}$  does not have WAP. We start with some definitions. 
Let $(A,<_A)\preceq (B,<_B)$ be an extension of posets and let $b\in B$. The {\bf quantifier free type  of $b$ over $A$ in $(B,<)$}, denoted by $\mathrm{qft}\big(b,A,(B,<_B)\big)$, is the collection of all formulas $\varphi(x)$ in some fixed variable $x$  of the form:
\[\varphi(x)\equiv (x<a) \; \;  \text{ or } \; \;  \varphi(x)\equiv (x>a) \; \; \text{ or }  \; \; \varphi(x)\equiv (x=a), \]
where $a\in A$ and $\varphi (b)$ holds in $(B,<_B)$. A {\bf quantifier free type $p(x)$ over $A$} is any collection of the form  $\mathrm{qft}\big(c,A,(C,<_C)\big)$, where $(A,<_A)\preceq (C,<_C)$ and $c\in C$.

If $p(x)$ is a quantifier free type over some poset $(A,<_A)$ and $\alpha\colon (A,<_A)\to (B,<_B)$ is an isomorphism, then we denote by $p[\alpha](x)$ the {\bf push forward of $p$ by $\alpha$}. This is the new quantifier free type over $(B,<_B)$ attained by setting for each $b\in B$: 
\begin{align*}
(x<b)\in p[\alpha](x)\iff (x<\alpha^{-1}(b))\in p(x)\\
(x>b)\in p[\alpha](x)\iff (x>\alpha^{-1}(b))\in p(x)\\
(x=b)\in p[\alpha](x)\iff (x=\alpha^{-1}(b))\in p(x)
\end{align*}

We record the following simple fact which we are going to use extensively below.

\begin{fact}\label{fact}
Let $(C,<_C,g)$  be a partial automorphism and let $c,d\in C$. If $p[g](x)=q[x]$, where  $p(x):=\mathrm{qft}\big(c,\mathrm{dom}(g),(C,<_C)\big)$ and  $q(x):=\mathrm{qft}\big(d,\mathrm{rng}(g),(C,<_C)\big)$,  then the map $g':=g\cup\{(c,d)\}$ induces a new partial automorphism $(C,<_C,g')$. 
\end{fact}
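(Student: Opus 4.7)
The plan is to verify the two defining requirements for $(C,<_C,g')$ to be a partial automorphism: that $g'$ is the graph of a well-defined injective partial map, and that this map is order-preserving between $\mathrm{dom}(g')$ and $\mathrm{rng}(g')$.

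First I would dispose of the degenerate case. If $c \in \mathrm{dom}(g)$, then the atomic formula $(x = c)$ lies in $p(x)$, and by definition of the push-forward $(x = g(c))$ lies in $p[g](x) = q(x)$, so $d = g(c)$ and $g' = g$, making the conclusion trivial. A symmetric argument applied to $g^{-1}$ handles the case $d \in \mathrm{rng}(g)$. Thus we may assume $c \notin \mathrm{dom}(g)$ and $d \notin \mathrm{rng}(g)$, in which case $g'$ is automatically a well-defined injective partial map.

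For order preservation, we need $x <_C y \iff g'(x) <_C g'(y)$ for all $x, y \in \mathrm{dom}(g')$. When both $x$ and $y$ lie in $\mathrm{dom}(g)$, the biconditional follows from $g$ being a partial automorphism, and the case $x = y = c$ is vacuous. By the symmetric roles of $x$ and $y$, the one remaining case is $x = a \in \mathrm{dom}(g)$, $y = c$, which unfolds as
\[ a <_C c \iff (x > a) \in p(x) \iff (x > g(a)) \in p[g](x) \iff (x > g(a)) \in q(x) \iff g(a) <_C d, \]
using, in order, the definition of $\mathrm{qft}$ over $\mathrm{dom}(g)$, the definition of the push-forward (noting that $g^{-1}(g(a)) = a$), the hypothesis $p[g] = q$, and the definition of $\mathrm{qft}$ over $\mathrm{rng}(g)$. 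The analogous chain with the direction of the order reversed handles $c <_C a$.

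I do not anticipate any substantial obstacle, as the statement is essentially a bookkeeping lemma: the hypothesis $p[g] = q$ is tailored so that adjoining $(c,d)$ to the graph of $g$ remains consistent with the order, and the verification simply traces through the definitions. The only point requiring a moment's care is noticing that the hypothesis automatically forces $d = g(c)$ in the degenerate case $c \in \mathrm{dom}(g)$, so that consistency there need not be assumed separately.
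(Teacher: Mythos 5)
Your verification is correct: the paper states this as a ``simple fact'' without proof, and your argument (disposing of the degenerate cases $c\in\mathrm{dom}(g)$ or $d\in\mathrm{rng}(g)$ via the equality formulas, then checking the order biconditional $a<_C c\iff g(a)<_C d$ by unwinding the definitions of $\mathrm{qft}$ and the push-forward) is exactly the routine bookkeeping the authors take for granted. No gaps.
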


Let $(B,<_B,f_B)$ be a partial automorphism and let $a,b\in B$ with $a<_Bb$. We say that $f_B$ is {\bf free in $(a,b)$}, if whenever $(B,<_B)\preceq (C,<_C)$ and  $c_1,\ldots,c_{\ell} \in C$, with $a<_C c_1<_C \cdots <_C c_{\ell}<_C b$, then
$(C,<_C,f_C)$ is a partial automorphism, where 
\[f_C:= f_B \cup \{(a,c_1),(c_1,c_2),\ldots,(c_{\ell-1},c_{\ell})\}.\]

\begin{lemma}\label{L1}
Let $(A,<_A,f_A)$ be a partial automorphism and let $s\in A$ with $s<_A f_A(s)$. Then, there is an extension $(A,<_A,f_A)\preceq (B,<_B,f_B)$,  some $n\in\mathbb{N}$, and $a,b\in B$ with  $a<_Bb$, so that $f_B^n(s)=a$ and $f_B$ is free in $(a,b)$. 
\end{lemma}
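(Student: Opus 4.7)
The plan is to build, in an extension of $(A,<_A,f_A)$, a long strictly increasing chain $s=t_0<_Bt_1<_B\cdots<_Bt_n=:a$ through iteration of $f$, then adjoin a single new element $b$ just above $a$ with carefully chosen relations to the rest of $B$, and verify freedom by a direct case analysis.

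To build the chain, I would iteratively apply Fact~\ref{fact}: given $t_{k-1}$, I introduce a fresh element $t_k$ together with the pair $(t_{k-1},t_k)$ added to $f$, where the quantifier-free type of $t_k$ over the current range of $f$ is declared to be the push-forward (under $f$) of the quantifier-free type of $t_{k-1}$ over the current domain, and the remaining relations of $t_k$ to existing elements are set to $\perp$ unless forced by transitivity. Since $s<_Af_A(s)$, the push-forward from $s<_Bt_1$ propagates to give $t_{k-1}<_Bt_k$ at every stage, so the chain is strictly increasing. I would take $n$ sufficiently large---specifically, larger than a bound depending on the finite cycle structure of $f_A$ on $A$. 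Setting $a:=t_n$, I then adjoin $b$ with $b>_Ba$, with $b<_Bv$ for every $v\in U:=\{f_B(u):u\in\mathrm{dom}(f_B),\ u>_Ba\}$, and with $b$ incomparable to every other element unless forced by transitivity. Consistency of $b$'s type uses that $u>_Ba\Rightarrow f_B(u)>_Ba$, which follows from the push-forward identity: for every $u\in\mathrm{dom}(f_B)$, the relation of $t_n$ to $f_B(u)$ equals the relation of $t_{n-1}$ to $u$, and $u>_Bt_n>_Bt_{n-1}$ gives $t_{n-1}<_Bu$, hence $t_n<_Bf_B(u)$.

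For freedom, take any extension $B\preceq C$ and chain $a<_Cc_1<_C\cdots<_Cc_\ell<_Cb$. By Fact~\ref{fact}, it suffices to verify that for each pair $(c_{i-1},c_i)$ (with $c_0:=a$) and each $u\in\mathrm{dom}(f_B)$, the relation of $c_i$ to $f_B(u)$ matches the push-forward of the relation of $c_{i-1}$ to $u$. This splits into three subcases: $u>_Ba$ (then $c_i<_Cf_B(u)$ holds since $c_i<_Cb<_Bf_B(u)$), $u\perp_Ba$ (then $c_i\perp f_B(u)$ holds since $a\not<_Bf_B(u)$ and $b\not>_Bf_B(u)$ by the construction), and $u<_Ba$ (reducing the required $c_i>_Cf_B(u)$ to the downward invariance $f_B(u)\leq_Ba$).

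The main obstacle is establishing this downward invariance $u<_Ba\Rightarrow f_B(u)\leq_Ba$ for $u\in\mathrm{dom}(f_A)$; for the chain iterates $t_k$ it is immediate ($f_B(t_k)=t_{k+1}\leq_Bt_n$). Via the push-forward identity, $f_B(u)<_Bt_n$ reduces to $u<_Bt_{n-1}$; and assuming $u<_Bt_n$, this reduces further to the claim that $t_n$ and $t_{n-1}$ induce the same relation on $u$. For $n$ sufficiently large, this follows from a finite-combinatorial argument on the $f_A$-orbit structure: each orbit in $A$ either eventually exits (so push-forward chains terminate and the generic $\perp$ choices prevent new forced inequalities) or cycles (in which case the order-preserving nature of $f_A$ combined with $s<_Af_A(s)$ rules out exceptional relations via a partial-automorphism contradiction). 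This pigeonhole-style argument is the technical heart of the lemma.
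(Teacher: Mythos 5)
Your skeleton (grow the $f$-orbit of $s$ to some $a=t_n=f_B^n(s)$, then plant $b$ just above $a$) matches the paper's, but your construction differs in both of its moving parts: the paper runs a staged construction over an enumeration of $A$, greedily realizing every relation $a_i<f^m(s)$ or $a_i\perp f^m(s)$ that \emph{some} extension could force, and then takes $b$ to be a twin of $a$ (same quantifier-free type as $a$ over $B\setminus\{a,b\}$), whereas you build the orbit by free one-point extensions and position $b$ by hand. Your route could be made to work, but as written it has two genuine gaps. The first is the invariance you yourself flag as ``the technical heart'' and then do not prove. What is actually needed is that each $w\in A$ bears the same relation to $t_{n-1}$ as to $t_n$; combined with the push-forward identity $t_n\mathrel{R}f_B(w)\iff t_{n-1}\mathrel{R}w$ this settles all three cases at once. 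This is not the orbit-cycling/pigeonhole argument you sketch: since $t_{m-1}<_Bt_m$, the sets $\{w\in A: w<_Bt_m\}$ increase with $m$ and the sets $\{w\in A: w>_Bt_m\}$ decrease with $m$, so both are eventually constant and one simply takes $n$ past stabilization. Note also that your incomparability case is \emph{not} ``by the construction'': from $u\perp a$ you only get $u\not>t_n$, and to conclude $u\not>t_{n-1}$ (hence $a\not<f_B(u)$) you need exactly the stabilization you deferred in the case $u<a$.

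The second gap is in the freedom verification. For chains of length $\ell\ge 2$ you must check, for each $u\in\mathrm{dom}(f_B)$, that the relation of $c_{i-1}$ to $u$ matches that of $c_i$ to $f_B(u)$; since the extension $C$ is arbitrary, this forces \emph{both} relations to be determined by $a<c_{i-1}<c_i<b$ alone. Your case analysis only arranges this for the relation of $c_i$ to $f_B(u)\in\mathrm{rng}(f_B)$. With your $b$ (placed below $U=\{f_B(u):u>_Ba\}$ only), an element $u\in\mathrm{dom}(f_B)$ with $u>_Ba$ but $u\not\ge_Bb$ admits extensions $C$ in which $c_{i-1}\perp_Cu$ while $c_i<_Cf_B(u)$ is forced via $c_i<_Cb<_Bf_B(u)$, and then $f_C$ is not a partial automorphism. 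You must ensure that \emph{every} element of $\mathrm{dom}(f_B)\cup\mathrm{rng}(f_B)$ lying above $a$ lies above $b$ --- automatic for the paper's twin $b$, and recoverable in your setup only by an extra argument tracing the free one-point extensions back through $\mathrm{rng}(f_B)$ (again using the stabilization above). Until these two points are supplied, the proof is incomplete.
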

\begin{proof}
Let $\{a_1,\ldots,a_k\}$ be an enumeration of $A$ and set $(A_0,<_0,f_0):=(A,<_A,f_A)$. Assume that for some $i<k$ we have defined  partial automorphisms  
\[(A_0,<_0,f_0)\preceq (A_1,<_1,f_1) \preceq \cdots \preceq (A_i,<_i,f_i),\] 
so that for some $n_i\in\mathbb{N}$  we have that 
\[f_{i}=f_A\cup\{(s,f_{i}(s)), (f_{i}(s),f^2_{i}(s)),\ldots, (f^{n_i-1}_{i}(s),f_i^{n_i}(s))\}.\]
We define a further extension   $(A_i,<_i,f_i) \preceq (A_{i+1},<_{i+1},f_{i+1})$ as follows:

\noindent {\bf Case 1.} If there is a partial automorphism $(D,<_D,f_D)$ which extends $(A_i,<_i,f_i)$, and some $m\geq n_i$ so that $f_D^m(s)$ is defined and $a_i<_D f_D^m(s)$, then let  $(A_{i+1}, <_{i+1},f_{i+1})$ be the unique partial automorphism with: 
\[(A_i,<_i,f_i) \preceq (A_{i+1},<_{i+1},f_{i+1})\preceq (D,<_D,f_D), \text{ so that }\]  $A_{i+i}=A_i\cup\{f^j_{D}(s))\mid 0\leq j\leq m\}$ and 
$f_{i+1}=f_{i}\cup \{(s,f_{D}(s)),\ldots,(f^{m-1}_{D}(s),f^{m}_{D}(s))\}$, and set  $n_{i+1}:=m$.

\noindent {\bf Case 2.} If Case 1 does not hold, but there is a partial automorphism $(D,<_D,f_D)$ which extends $(A_i,<_i,f_i)$, and some $m\geq n_i$ so that $f_D^m(s)$ is defined and $f_D^m(s)\perp a_i$, then let  $(A_{i+1}, <_{i+1},f_{i+1})$ be the unique partial automorphism with 
\[(A_i,<_i,f_i) \preceq (A_{i+1},<_{i+1},f_{i+1})\preceq (D,<_D,f_D), \text{ so that }\]  $A_{i+i}=A_i\cup\{f^j_{D}(s))\mid 0\leq j\leq m\}$ and 
$f_{i+1}=f_{i}\cup \{(s,f_{D}(s)),\ldots,(f^{m-1}_{D}(s),f^{m}_{D}(s))\}$, and set  $n_{i+1}:=m$.

\noindent {\bf Case 3.} If neither Case 1 nor Case 2 hold, then set $(A_{i+1},<_{i+1},f_{i+1}):=(A_i,<_i,f_i)$ and $n_{i+1}:=n_i$

\medskip

Given the resulting partial automorphism $(A_{k},<_{k},f_{k})$, set $n:=n_k+1$, and let $(E,<_E,f_{E})$ be any $\preceq$-minimal extension  of $(A_{k},<_{k},f_{k})$ which adds $f^{n_k}_k(s)$ into the domain of $f_E$. By minimality, $E=A_k\cup\{a\}$ for some  new point $a$ and $f_E=f_k\cup\{(f^{n_k}_k(s),a)\}$. Indeed, notice that if $a$ was not a new point, then it would have to come from $A$. But then,  $a=a_j$ for some $j\leq k$, and any further extension $(F,<_F,f_{F})$ of $(E,<_E,f_{E})$ which would include  $a$ in the domain of $f_{F}$, would activate Case 1 at the $j$-th stage of the above construction. As a consequence, by the end of the construction, we would have  $a<_k f_k^{n_k}(s)$, and since the orbit of $s$ under any extension of $f_A$ is increasing that would contradict  that  $f^n_E(s)=a$.

We may finally extend  $(E,<_E,f_{E})\preceq (B,<_B,f_{B})$ to the desired  partial automorphism $(B,<_B,f_{B})$ which is defined by: $B:= E\cup\{b\}$ where $b\not \in E$; $a<_B b$ and $\mathrm{qft}\big(b,E \setminus \{a\} ,(B,<_B)\big)=\mathrm{qft}\big(a, E \setminus \{a\} ,(B,<_B)\big)$; and $f_B:=f_E$. It is easy to check that $(B,<_B)$ is a poset and therefore $(B,<_B,f_{B})$ is a partial automorphism extending $(A,<_A,f_{A})$ and  $f_B^n(s)=a<_Bb$.

We are left to show that $f_B$ is free in $(a,b)$. 
Let  $(B,<_B)\preceq (C,<_C)$ with $a<_C c_1<_C \cdots <_C c_{\ell}<_C b$ for some $c_1,\ldots,c_{\ell} \in C$.  The fact that $(C,<_C,f_C)$, with
\[f_C:= f_B \cup \{(a,c_1),(c_1,c_2),\ldots,(c_{\ell-1},c_{\ell})\}.\]
is a partial automorphism follows by a straightforward induction based on Fact \ref{fact} and the following claim.

\begin{claim}
Let $c\in C$ with $a<_Cc<_Cb$.  Then  we have that:
\begin{enumerate}
\item $\mathrm{qft}\big(c,B\setminus\{a\},(C,<_C)\big)\big)=\mathrm{qft}\big(a,B\setminus\{a\},(C,<_C)\big)\big)$;
\item if $p(x):=\mathrm{qft}\big(c,\mathrm{dom}(f_B),(C,<_C)\big)$ and $q(x):=\mathrm{qft}\big(c,\mathrm{rng}(f_B),(C,<_C)\big)$, then \[q(x)=p[f_B](x).\]
\end{enumerate} 
\end{claim}
\begin{proof}[Proof of Claim]
For simplicity we will denote  $<_C$ by $<$ throughout this proof.
\medskip{}

\noindent (1)  follows from transitivity of $<$, since  $a<c<b$ and  $\mathrm{qft}\big(b,E \setminus \{a\} ,(B,<_B)\big)=\mathrm{qft}\big(a, E \setminus \{a\} ,(B,<_B)\big)$.\medskip{}

\noindent For  (2), we need to show that for every $w\in \mathrm{dom}(f_B)$ we have that:
\begin{enumerate}
\item[(i)]  $c=f_B(w)$ if and only if $c=w$; 
\item[(ii)]  $c< f_B(w)$ if and only if $c<w$;
\item[(iii)]  $c> f_B(w)$ if and only if $c>w$.
\end{enumerate} 
By part (1) of the claim, and since $\mathrm{dom}(f_B)  \subseteq B\setminus \{a\}$, it is enough to show that  for every $w\in \mathrm{dom}(f_B)  \subseteq B\setminus \{a\}$ we have that: 
\begin{enumerate}
\item[(i)$'$]  $c= f_B(w)$ if and only if $a=w$;
\item[(ii)$'$]  $c< f_B(w)$ if and only if $a<w$;
\item[(iii)$'$]  $c> f_B(w)$ if and only if $a>w$.
\end{enumerate} 
Notice that (i)$'$ is trivially true since $c\not\in \mathrm{rng}(f_B)$. Moreover, for $w=f^{-1}_B(a)=f^{n-1}_B(s)$, both (ii)$'$ and (iii)$'$ are satisfied
since $w<f_B(w)<c$.
If now $w\in \mathrm{dom}(f_B)\setminus\{f^{-1}_B(a)\}$, then  $f_B(w)\in B\setminus \{a\}$. Hence, by a second application of part (1) of the claim we are left to show that for all   $w\in \mathrm{dom}(f_B)\setminus\{f^{-1}_B(a)\}$ we have that:
\begin{enumerate}
\item[(ii)$''$]  $a< f_B(w)$ if and only if $a<w$;
\item[(iii)$''$]  $a> f_B(w)$ if and only if $a>w$.
\end{enumerate} 
This statement is clear if $w= f_B^{l}(s)$ for some $l<n-1$, since $f_B$ is increasing in the $f_B$-orbit of $s$ and therefore $w<f_B(w)<a=f_B^n(s)$.  We may therefore assume that both $w\in A$ and $f_B(w)\in A$. Hence, $w=a_i$  and $f_B(w)=a_j$ for some $i,j\leq k$. We  consider the following three cases:

\noindent $\bullet$ If $w>a$, then $w>a>f_B^{-1}(a)$, and hence $f_B(w)>a$.

\noindent $\bullet$ If $w<a$, then any extension $g$ of $f_B$ with $a\in \mathrm{dom}(g)$ satisfies  $g(w)<g(a)$, i.e.,
\[a_j=f_B(w)=g(w)<g(a)=g(f^n_B(s)).\]
But any such extension $g$ is, in particular, an extension of $f_{j-1}$ in the construction above. Hence, by Case 1 at the $j$-th step of the  construction we  already have that $a_j<f_{j}^{n_j}(s)$.  But then,  $f_B(w)=a_j<f_{j}^{n_j}(s) <a$.

\noindent $\bullet$ If  $w\perp a$, then by  Case 2 at the $i$-th step of the above construction we have that $w\perp f_B^{n_i}(s)$. Since $f^{n_i}_B(s)\leq f^{-1}_B(a)<a$ we have that $w\perp f^{-1}_B(a)$. Hence, $f_B (w)\perp a$.
\end{proof}

Let  $p_0(x):=\mathrm{qft}\big(a,\mathrm{dom}(f_B),(C,<_C)\big)$ and  $q_1(x):=\mathrm{qft}\big(c_1,\mathrm{rng}(f_B), (C,<_C)\big)$. Using both (1),(2) of the last claim we have that  $q_1(x)=p_0[f_B](x)$. By Fact \ref{fact}  we have that $(C,<_C,f_B\cup\{(a,c_1)\})$ is a partial automorphism. 
By induction we   show that $(C,<_C,f_C)$ is a partial automorphism. The inductive step is as follows. Take $t$ with $0<t\leq \ell$.  
By (1) and (2) of the last claim we have that   $q'_{t}(x)=p'_{t-1}[f_B](x)$, where
\[q'_{t}(x)=\mathrm{qft}\big(c_t,\mathrm{rng}(f_B), (C,<_C)\big) \; \text{ and } \; p'_{t-1}(x)=\mathrm{qft}\big(c_{t-1},\mathrm{dom}(f_B),(C,<_C)\big).\]
 Since $a<_Cc_1<_C\cdots<_Cc_{t-1}<_Cc_t$, we also have that the quantifier-free type
\[\mathrm{qft}\big(c_t,\{c_1,\ldots,c_{t-1}\}, (C,<_C)\big)\]
is the push-forward of the the quantifier-free type
\[\mathrm{qft}\big(c_{t-1},\{a,c_1,\ldots,c_{t-2}\}, (C,<_C)\big),\]
under the partial automorphism $\{(a,c_1),(c_1,c_2),\ldots,(c_{t-2},c_{t-1})\}$.
Combining these two facts, we have that  the quantifier-free type
\[q_t(x):=\mathrm{qft}\big(c_t,\mathrm{rng}(f_B)\cup\{c_1,\ldots,c_{t-1}\}, (C,<_C)\big)\]
is the push forward of  the quantifier-free type
 \[p_{t-1}(x):=\mathrm{qft}\big(c_{t-1},\mathrm{dom}(f_B)\cup\{a,c_1,\ldots,c_{t-2}\},(C,<_C)\big)\] 
under  the map $f_B\cup \{(a,c_1),(c_1,c_2),\ldots,(c_{t-2},c_{t-1})\}$. By Fact \ref{fact}, if the latter is a partial automorphism, then so is $f_B\cup \{(a,c_1),(c_1,c_2),\ldots,(c_{t-2},c_{t-1}),(c_{t-1},c_{t})\}$. 

Hence, $(C,<_C,f_C)$ is indeed a partial automorphism.

\end{proof}

We can now conclude with the proof of Theorem \ref{T} which implies Theorem \ref{T:intro}.

\begin{proof}[Proof of Theorem \ref{T}]

Consider the pair $(A,<_A,f_A,g_A)$ of partial automorphisms with:
\begin{enumerate}
\item  $A=\{x,y,z\}$ with $x<_Ay<_Az$;
\item $f_A=\{(x,y), (z,z)\}$ and $g_A=\{(x,z)\}$.
\end{enumerate}
Let  $(\widetilde{A},<_{\widetilde{A}},f_{\widetilde{A}},g_{\widetilde{A}})$  be any extension of $(A,<_A,f_A,g_A)$. We will define two further extensions $(A_1,<_1,f_1,g_1)$ and $(A_2,<_2,f_2,g_2)$ of $(\widetilde{A},<_{\widetilde{A}},f_{\widetilde{A}},g_{\widetilde{A}})$ which do not amalgamate over  $(A,<_A,f_A,g_A)$. We start by defining some initial extension $(A_0,<_0,f_0,g_0)$ of  $(\widetilde{A},<_{\widetilde{A}},f_{\widetilde{A}},g_{\widetilde{A}})$   which will be contained in both $(A_1,<_1,f_1,g_1)$ and $(A_2,<_2,f_2,g_2)$.

Apply Lemma~\ref{L1} to  $(\widetilde{A},<_{\widetilde{A}},f_{\widetilde{A}})$ for $s:=x$  to get some extension $(B,<_B,f_B)$, some $n\in\mathbb{N}$, and some $a_f,b_f\in B$ with $a_f<_B b_f$, so that $(f_B)^n(x)=a_f$ and $f_B$ is free in $(a_f,b_f)$. By extending further $(B,<_B)$ if necessary we can assume without loss of generality that there is some 
$c\in B$ with $a_f<_Bc<_Bb_f$.  Notice that since $f_A(z)=z$ and $x<_B z$ we have that $f^n_B(x)<_Bz$ for all $n\in\mathbb{N}$. 
Since by the construction in the proof of  Lemma \ref{L1} we have that 
\[\mathrm{qft}\big(a_f, B \setminus \{a_f,b_f\} ,(B,<_B)\big)=\mathrm{qft}\big(b_f,B \setminus \{a_f,b_f\} ,(B,<_B)\big),\]
 it follows that $b_f<_B z$. 
 Since $x<_Bc<_Bz$ and $g_A(x)=z$,  any extension $(E,<_E,g)$ of $(B,<_B,g_A)$ with $c\in \mathrm{dom}(g)$ satisfies $c<_Eg(c)$. 
 Hence, by Lemma~\ref{L1} applied to an arbitrarily chosen such extension  $(E,<_E,g)$ with $s:=c$ we get a further extension $(B,<_B,g_A)\preceq (C,<_C,g_C)$,  some $m\in\mathbb{N}$, and  $a_g,b_g\in C$ with $a_g<_Cb_g$, so that $g_C^m(c)=a_g$ and $g_C$ is free in $(a_g,b_g)$.

Set $c_0:=c, c_1:=g_C(c),\cdots, c_m :=g_C^m(c)$. By extending $C$ further if necessary, we may assume without loss of generality that for every $i$ with $0\leq i \leq m$ there is a point $d_i\in C$ with   $d_i\not\in \mathrm{dom}(g_C)$ and $c_i<_Cd_i$, and so that:
\[\mathrm{qft}\big(d_i,C\setminus \{c_i,d_i\},(C,<_C)\big)=\mathrm{qft}\big(c_i,C\setminus \{c_i,d_i\},(C,<_C)\big).\]
Set $(A_0,<_0):=(C,<_C)$. By a simple induction using Fact \ref{fact}  we have that  $(A_0,<_0,g_0)$ with  $g_0:=g_C\cup \{(d,d_1), (d_1,d_2),\ldots,(d_{m-1},d_m)\}$, is a partial automorphism. Since $f_B$ is free in $(a_f,b_f)$ and $a_f<_0c<_0d<_0b_f$, we similarly have that  $(A_0,<_0,f_0)$ with  $f_0:=f'\cup \{(a_f,c),(c,d)\}$ is also a partial automorphism. 
We therefore have a pair $(A_0,<_0,f_0,g_0)$ of partial automorphisms.

We may now define two further extensions of $(A_0,<_0,f_0,g_0)$ which do not amalgamate over $(A,<_A,f_A,g_A)$ as follows:  $(A_1,<_1,f_1,g_1)$ is defined by setting $A_1:=A_0, f_1:=f_0, g_1:= g_0\cup \{(g^m_0(c),d_m)\}$; and $(A_2,<_2,f_2,g_2)$ is defined by $A_2:=A_0\cup\{e\}, f_2:=f_0, g_2:= g_0\cup \{(g^m_0(c),e),(e,d_m)\}$, where $e$ is a new point with $g^m_0(c)<_2e<_2d_m$. Since  $a_g<_2g^m_0(c)<_2e<_2d_m<_2b_g$ and $g_0$ is free in $(a_g,b_g)$, we have that both  $(A_1,<_1,g_1)$  and $(A_2,<_2,g_2)$ are  partial automorphisms.

Finally, to see that  $(A_1,<_1,f_1,g_1)$ and  $(A_2,<_2,f_2,g_2)$ cannot be amalgamated over $(A,<_A,f_A,g_A)$, notice that in any such amalgam, the copy of $x$ in $A_1$ has to be identified with the copy of $x$ in $A_2$. As a consequence, in such amalgam, the point $g_1^t\circ f_1^s(x)$ from $A_1$ has to be identified with the point  $g_2^t\circ f_2^s(x)$ from $A_2$,  for all $s,t\in\mathbb{N}$ for which both $g_1^t\circ f_1^s(x)$ and $g_2^t\circ f_2^s(x)$ are defined.  However, by the above construction we have that  $(g_1^{m+1} \circ f_1^{n+1})(x)= (g_1^{m} \circ f_1^{n+2})(x)$ in $A_1$ and $(g_2^{m+1} \circ f_2^{n+1})(x)<_2 (g_2^{m} \circ f_2^{n+2})(x)$.
\end{proof}

\end{document}